\newtheorem{thm}{Theorem}[section]
\newtheorem{cor}[thm]{Corollary}
\theoremstyle{definition}
\newtheorem{defn}[thm]{Definition}
\theoremstyle{remark}
\newtheorem{rem}[thm]{Remark}
\numberwithin{equation}{section}
\begin{document}
    \title{{Generalization of Lohwater-Pommerenke's Theorem}}%
    \author{P.V.Dovbush}

    %\thanks{}%
    \subjclass{32A18}%
    \keywords{Marty's Criterion, Zalcman's Lemma, normal families, normal holomorphic functions of several complex variables}%

    %\dedicatory{}%
    %\commby{}%
    % ----------------------------------------------------------------
    \begin{abstract}
        In this paper, as an application of Zalcman's lemma in $\mathbb{C}^n$, we give a sufficient condition for normality of holomorphic functions of several complex variables, which generalizes previous known one-dimensional criterion of A.J.
        Lohwater and Ch.
        Pommerenke \cite[Theorem 1]{MR0338381}.
    \end{abstract}
    \maketitle
    A ``heuristic principle'' attributed to Andr\'{e} Bloch says that a family of holomorphic functions which have a property $P$ in common in a domain $\Omega\subset \mathbb{C}$ is [apt to be] a normal family in $\Omega$ if $P$ cannot be possessed by non-constant holomorphic functions in the whole plane $\mathbb{C}$.
    [An example of such a P is ``uniform boundedness.'']

    A rigorous formulation and proof of this was given in 1975 by Zalcman \cite{MR379852}.
    Zalcman's work was inspired by the result of Lohwater and Pommerenke \cite{MR0338381}.
    Their theorem deals with normal functions, not normal families, but the proofs are almost identical.

    It is the purpose of this note to give a generalization of the result of Lohwater and Pommerenke \cite{MR0338381} for normal functions defined on bounded domains of $\mathbb{C}^n$.

    It is known that the notion of normality can be generalized in various ways to higher dimensions.
    Here we adopt the definition of Cima and Krantz \cite[p. 305]{MR700143}.

    Let $\Omega$ be a bounded domain in $\mathbb{C}^n$.
    By $B(a,r)$ we denote the ball in $\mathbb{C}^n$ with center $a$ and radious $r$.
    Thus $B(a,r)$ consist of all $z\in \mathbb{C}^n$ such that $|z-a|<r$.

    For every function $\varphi$ of class $C^2(\Omega)$ we define at each point $z\in \Omega$ an Hermitian form
    \[
        L_z(\varphi, v):=\sum_{k,l=1}^n \frac{\partial^2\varphi}{\partial z_k \partial \overline{z}_l}(z) v_k \overline{v}_{l}
    \]
    and call it the Levi form of the function $\varphi$ at $z$.
    For a holomorphic function $f$ in $\Omega$, set
    \begin{equation}
        \label{e2}
        f^\sharp (z):=\sup_{ |v|=1}\sqrt{L_z(\log(1+|f|^2), v)}.
    \end{equation}
    This quantity $f^\sharp (z)$ is well defined since the Levi form $L_z(\log(1+|f|^2), v)$ is nonnegative for all $z\in \Omega$.

    Let $U$ be a unit disk in $\mathbb{C}$.
    The infinitesimal Kobayashi metric on $\Omega$ is given by
    \[
        K_\Omega(z, v)=\inf \{\alpha : \alpha>0 \textrm{ and }\exists g : \Omega\to U \textrm{ holomorphic, }g(0) = z \textrm{ and }g'(0) = v/\alpha\}
    \]
    \begin{defn}
        \label{def-norm}
        A holomorphic function $f \colon \Omega\to \mathbb{C}$ is called normal if exists a constant $C$, $0<C<\infty$, such that
        \begin{equation}
            \label{ch2:ineq:normfunc:7}
            L_z(\log (1+|f|^2),v)\leq C \cdot K_\Omega^2(z,v)
        \end{equation}
        for all $(z,v) \in \Omega\times \mathbb{C}^n$.
    \end{defn}

    \begin{thm}
        \label{LP1}
        A non-constant function $f$ holomorphic on $\Omega\subset \mathbb{C}^n$ is non-normal if there exist sequences $z_j\in \Omega$, $\rho_j=1/f^\sharp(z_j)\to 0$, such that the sequence
        \[
            g_j(\zeta)=f(z_j+\rho_j \zeta)
        \]
        converges locally uniformly in $\mathbb{C}^n$ to a non-constant entire function $g$ satisfying $g^\sharp(\zeta)\leq g^\sharp(0)=1$.
    \end{thm}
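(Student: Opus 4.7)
I will argue by contradiction: suppose $f$ \emph{is} normal in the sense of Definition~\ref{def-norm}, so that there exists $C<\infty$ with $L_z(\log(1+|f|^2),v)\le C K_\Omega^2(z,v)$ for every $(z,v)\in\Omega\times\mathbb{C}^n$. Taking the supremum over $|v|=1$ and a square root gives
\[
f^\sharp(z)\;\le\;\sqrt{C}\,\sup_{|v|=1}K_\Omega(z,v)\qquad (z\in\Omega).
\]
The plan is to show that the rescaling sequence $\{z_j,\rho_j\}$ forces this inequality to fail at $z=z_j$ once $j$ is large.

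The first ingredient is a scaling identity for $g_j^\sharp$. A direct chain-rule computation on $g_j(\zeta)=f(z_j+\rho_j\zeta)$ yields $L_\zeta(\log(1+|g_j|^2),v)=\rho_j^2 L_{z_j+\rho_j\zeta}(\log(1+|f|^2),v)$, whence
\[
g_j^\sharp(\zeta)=\rho_j\,f^\sharp(z_j+\rho_j\zeta),
\]
and in particular $g_j^\sharp(0)=\rho_j f^\sharp(z_j)=1$ by the defining choice of $\rho_j$; this is the key numerical fact that will conflict with normality. The second ingredient is an upper bound on $K_\Omega$ at $z_j$. Fix any $R>\sqrt{C}$; since $g_j\to g$ locally uniformly on all of $\mathbb{C}^n$, for $j$ larger than some $j_0(R)$ the function $g_j$ is defined on $B(0,R)$, which forces $B(z_j,\rho_j R)\subset\Omega$. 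The distance-decreasing property of the Kobayashi metric under the inclusion $B(z_j,\rho_j R)\hookrightarrow\Omega$, combined with the biholomorphism $w\mapsto z_j+\rho_j R w$ from the unit ball $B(0,1)$ to $B(z_j,\rho_j R)$ and the standard value $K_{B(0,1)}(0,v)=|v|$, yields
\[
K_\Omega(z_j,v)\;\le\;K_{B(z_j,\rho_j R)}(z_j,v)\;=\;\frac{|v|}{\rho_j R}.
\]
Inserting this into the normality inequality at $z=z_j$ gives $f^\sharp(z_j)\le\sqrt{C}/(\rho_j R)$, and therefore $1=\rho_j f^\sharp(z_j)\le\sqrt{C}/R$, contradicting the choice $R>\sqrt{C}$.

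The one delicate point I expect is the Kobayashi upper bound via the sub-ball $B(z_j,\rho_j R)$; the rest reduces to bookkeeping with the Levi form, the chain rule, and the identity $\rho_j f^\sharp(z_j)=1$. The limit $g$ and the normalization $g^\sharp\le g^\sharp(0)=1$ enter the argument only indirectly, through the containment $B(z_j,\rho_j R)\subset\Omega$ extracted from locally uniform convergence on the whole of $\mathbb{C}^n$; this is exactly the higher-dimensional analogue of the disk containment $\{|z-z_j|<\rho_j R\}\subset U$ that the original Lohwater-Pommerenke argument combines with the one-dimensional Poincar\'e-metric bound $(1-|z|^2)f^\sharp(z)\le\sqrt{C}$ to obtain its contradiction.
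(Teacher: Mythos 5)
Your argument is correct, and it proves the implication exactly as the theorem is worded (existence of the rescaling sequences implies $f$ is non-normal), but it is a genuinely different route from the paper's. The paper argues in the opposite direction: it assumes $f$ is \emph{non-normal}, uses Marty's criterion to show that some rescaled family $\{f(p_j+\delta_j\zeta)\}$ fails to be normal in the unit ball, and then invokes Zalcman's lemma in $\mathbb{C}^n$ to \emph{produce} sequences $z_j$, $\rho_j$ with the stated properties; i.e.\ its logical organization is the converse of the literal statement, whereas yours is the contrapositive. Analytically the two share the same core estimate: your bound $f^\sharp(z_j)\le\sqrt{C}\sup_{|v|=1}K_\Omega(z_j,v)\le\sqrt{C}/(\rho_j R)$, obtained from the inscribed ball $B(z_j,\rho_j R)\subset\Omega$ and $K_{B(a,r)}(a,v)=|v|/r$, is the $\zeta=0$ case of the paper's inequality (\ref{ch2:ineq:LP2}); but you then close the loop elementarily against the normalization $\rho_j f^\sharp(z_j)=1$, with no appeal to Marty's criterion, Zalcman's lemma, or any compactness argument. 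What your approach buys is self-containedness and a sharper accounting of which hypotheses matter: as you correctly observe, only the eventual containment $B(z_j,\rho_j R)\subset\Omega$ (equivalently, $\operatorname{dist}(z_j,\partial\Omega)\cdot f^\sharp(z_j)\to\infty$, colliding with the bound $\operatorname{dist}(z,\partial\Omega)\cdot f^\sharp(z)\le\sqrt{C}$ that normality forces) is used, and neither the non-constancy of $g$, nor $g^\sharp\le g^\sharp(0)=1$, nor even $\rho_j\to0$ enters. What the paper's route buys is the stronger, Lohwater--Pommerenke-type converse (non-normality yields such sequences), which your argument does not address and which is where the real content of the rescaling method lies. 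All the individual steps you flag as delicate — the Kobayashi upper bound at the center of the sub-ball and the chain-rule identity $g_j^\sharp(\zeta)=\rho_j f^\sharp(z_j+\rho_j\zeta)$ — check out.
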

    \begin{proof}
        Let $\{p_j\}$ be an arbitrary sequence of points in $\Omega$, then $B(p_j, \delta_j)\subset \Omega$, where $\delta_j=dist(p_j,\partial\Omega)$.
        By the distance-decreasing property of Kobayashi metric
        \[
            K_\Omega(z,v)\leq K_{B(p_j, \delta_j)}(z,v)
        \]
        for all $(z,v)\in B(p_j, \delta_j)\times \mathbb{C}^n$.
        The Kobayashi metric of $B(p_j, \delta_j)$ is given by
        \[
            K_{B(p_j, \delta_j)}(z,v)=\frac{[(\delta_j^{2}-|z-p_j|^2)|v|^2+|(z-p_j,v)|^2]^{1/2}}{\delta_j^{2}-|z-p_j|^2}
        \]
        which clearly satisfy the inequality:
        \[
            K_{B(p_j, \delta_j)}(z,v)\leq \frac{\delta_j|v|}{\delta_j^{2}-|z-p_j|^2}.
        \]
        If $f$ is normal in $\Omega$ and then from (\ref{ch2:ineq:normfunc:7}) follows
        \begin{equation}
            \label{ch2:ineq:LP2}
            f^\sharp(p_j+\delta_j\zeta)\leq \frac{\sqrt{C}\delta_j}{\delta_j^{2}-|\delta_j\zeta|^2}.
        \end{equation}
        Set $g_j(\zeta):=f(p_j+\delta_j\zeta)$.
        By the invariance of the Levi form under biholomorphic mappings, we have
        \[
            L_\zeta(\log(1+|g_j|^2),v)=L_{p_j+\delta_j\zeta}(\log(1+|f_j|^2),\delta_{j}v)
        \]
        and so
        \[
            g_j^\sharp(\zeta)=\delta_{j}f^\sharp(p_j+\delta_j\zeta).
        \]
        It follows from (\ref{ch2:ineq:LP2}) that
        \[
            g_j^\sharp(\zeta)\leq \frac{\sqrt{C}}{1-|\zeta|^2}
        \]
        for all $j$ and all $\zeta\in B(0,1)$.
        By Marty's Criterion (Theorem \cite[Theorem 2.1]{MR4071476}) the family $\{g_j(\zeta)\}$ is normal in the unit ball $B(0,1)$.

        So if $f$ is not normal function in $\Omega$, then there exists a sequence $\{p_j\}$ in $\Omega$ such that $\{g_j(\zeta):=f(p_j+\delta_j\zeta)\}$ is not a normal sequence in a point, say, $\zeta_0$, $\zeta_0\in B(0,1)$.
        It follows from Zalcman's lemma \cite[Theorem 3.1]{MR4071476} that there exist $\zeta_j\to \zeta_0$, $\rho_j=1/g_j^\sharp(\zeta_j)\to 0$, such that the sequence
        \[
            g_j(\zeta)=f_j(p_j+\delta_j(\zeta_j+\rho_j \zeta))
        \]
        converges locally uniformly in $\mathbb{C}^n$ to a non-constant entire function $g$ satisfying $g^\sharp(\zeta)\leq g^\sharp(0)=1$.
        A simple calculation shows that $\delta_j\rho_j=1/f^\sharp(p_j+\delta_j\zeta_j)$ and therefore
        \[
            g_j(\zeta)=f_j(p_j+\delta_j\zeta_j+\zeta/f^\sharp(p_j+\delta_j\zeta_j))
        \]
        converges locally uniformly in $\mathbb{C}^n$ to a non-constant entire function $g$ satisfying $g^\sharp(\zeta)\leq g^\sharp(0)=1$.
        It follows $z_j=p_j+\delta_j\zeta_j$, $\rho_j=1/f^\sharp(p_j+\delta_j\zeta_j)$ do the work.
        This completes the proof of Theorem \ref{LP1}.
    \end{proof}

    The next result is closely related to the preceding theorem and is essentially a reformulation of (\ref{ch2:ineq:normfunc:7}).
    \begin{thm}
        \label{LP}
        Let $\Omega \subset \mathbb{C}^n$ be a bounded domain.
        If $f: \Omega\to \mathbb{C}$ is a normal holomorphic function, then for every choice of sequences $\{p_j\}$ in $\Omega$ and $\{r_j\}$, $r_j>0$, with $\lim_{j\to \infty}r_j/\delta_j = 0$, where $\delta_j=dist(p_j,\Omega)$, the sequence $\{f(p_j+r_j\zeta)\}$ converges locally uniformly to a constant function in $\mathbb{C}^n$.
    \end{thm}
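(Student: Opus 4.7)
The plan is to repeat the opening computation from the proof of Theorem~\ref{LP1} almost verbatim, but with the rescaling radius $r_j$ in place of $\delta_j$; the point is that the hypothesis $r_j/\delta_j\to 0$ forces the rescaled spherical derivative to tend to zero uniformly on compacta of $\mathbb{C}^n$, so the limit family becomes ``flat'' and any limit function is constant.

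Set $h_j(\zeta):=f(p_j+r_j\zeta)$. For any fixed $R>0$, as soon as $r_jR<\delta_j$, the map $\zeta\mapsto p_j+r_j\zeta$ sends $\{|\zeta|\leq R\}$ into $B(p_j,\delta_j)\subset\Omega$, so $h_j$ is eventually holomorphic on every fixed ball in $\mathbb{C}^n$. By the biholomorphic invariance of the Levi form already used in the proof of Theorem~\ref{LP1}, $h_j^\sharp(\zeta)=r_j\,f^\sharp(p_j+r_j\zeta)$. Chaining (\ref{ch2:ineq:normfunc:7}) with the distance-decreasing property of the Kobayashi metric and the ball estimate $K_{B(p_j,\delta_j)}(z,v)\leq \delta_j|v|/(\delta_j^2-|z-p_j|^2)$, exactly as in the derivation of (\ref{ch2:ineq:LP2}), one obtains
\[
h_j^\sharp(\zeta)\;\leq\;\frac{\sqrt{C}\,(r_j/\delta_j)}{1-(r_j/\delta_j)^2|\zeta|^2}.
\]
For $\zeta$ in any fixed compact subset of $\mathbb{C}^n$ the right-hand side tends to $0$, so $h_j^\sharp\to 0$ locally uniformly; in particular $\{h_j^\sharp\}$ is locally bounded, and Marty's criterion shows that $\{h_j\}$ is a normal family on $\mathbb{C}^n$.

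To close, any locally uniform subsequential limit $h$ is an entire function satisfying $h^\sharp\equiv 0$. A direct computation of the Levi form of $\log(1+|h|^2)$ yields
\[
h^\sharp(\zeta)^2\;=\;\frac{\sum_{k=1}^n|\partial h/\partial\zeta_k|^2}{(1+|h(\zeta)|^2)^2},
\]
so $h^\sharp\equiv 0$ forces every partial derivative of $h$ to vanish identically and hence $h$ must be constant. Consequently every subsequential limit of $\{h_j\}$ is constant, which is the desired conclusion.

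I expect the main obstacle to be purely notational, since all the analytical content is recycled from the proof of Theorem~\ref{LP1}. One only needs to be careful about two small points: that $h_j$ is eventually defined on each compact subset of $\mathbb{C}^n$ (which uses precisely the hypothesis $r_j/\delta_j\to 0$), and that an entire $h$ with $h^\sharp\equiv 0$ must be constant---the latter being the natural several-variables analogue of the familiar one-variable identity $f^\sharp=|f'|/(1+|f|^2)$.
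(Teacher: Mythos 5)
Your proposal is correct and follows essentially the same route as the paper: rescale via $h_j(\zeta)=f(p_j+r_j\zeta)$, use the normality inequality (\ref{ch2:ineq:normfunc:7}) together with the distance-decreasing property and the ball estimate for the Kobayashi metric plus the Levi-form invariance to get $h_j^\sharp(\zeta)\leq \sqrt{C}(r_j/\delta_j)/(1-(r_j/\delta_j)^2|\zeta|^2)\to 0$, then apply Marty's criterion and conclude that any limit has vanishing $\sharp$-derivative and hence is constant. The only (harmless) difference is that you justify the final step with the explicit formula $h^\sharp(\zeta)^2=\sum_k|\partial h/\partial\zeta_k|^2/(1+|h|^2)^2$, where the paper simply invokes $g^\sharp\equiv 0\Rightarrow dg\equiv 0$, and like the paper you establish constancy of subsequential limits rather than convergence of the full sequence.
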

    \begin{proof}
        Set $R_j = \delta_j/r_j$.
        It is clear that $R_j\to \infty$.
        Without restriction we can assume that $R_j>j$.
        Then for all $\zeta\in \mathbb{C}^n$ such that $|\zeta| < j$, we have
        \[
            |p_j+r_j\zeta-p_j|= r_j|\zeta|<\delta_{j}
        \]
        so that $p_j+r_j\zeta \in B(p_j, \delta_j)\subset \Omega$.
        Hence $g_j(\zeta):=f(p_j+r_j\zeta)$ is a holomorphic function on the ball $B(0,j)=\{\zeta\in \mathbb{C}^n : |\zeta|<j\}$.

        It is an immediate consequence of the definition that since $f$ is the normal function, then there exists a positive constant $C$ such that
        \[
            L_z(\log (1+|f|^2),v)\leq C \cdot K_\Omega^2(z,v)
        \]
        for all $(z,v)\in \Omega\times \mathbb{C}^n$.

        Since $B(p_j, \delta_j)$ is contained in $\Omega$ the distance-decreasing property yields
        \[
            K_\Omega(z,v)\leq K_{B(p_j, \delta_j)}(z,v)
        \]
        for all $(z,v)\in B(p_j, \delta_j)\times \mathbb{C}^n$.
        Since
        \[
            K_{B(p_j, \delta_j)}(z,v)=\frac{[(\delta_j^{2}-|z-p_j|^2)|v|^2+|(z-p_j,v)|^2]^{1/2}}{\delta_j^{2}-|z-p_j|^2}.
        \]
        we have
        \[
            K_{B(p_j, \delta_j)}(z,v)\leq \frac{\delta_j|v|}{\delta_j^{2}-|z-p_j|^2}.
        \]
        Hence
        \[
            K_\Omega(z,v)\leq \frac{\delta_j|v|}{\delta_j^{2}-|z-p_j|^2}
        \]
        for all $(z,v)\in B(p_j, \delta_j)\times \mathbb{C}^n$.
        Therefore,
        \[
            \sqrt{L_{p_j+r_j\zeta}(\log(1+|f_j|^2),v)}\leq \frac{\sqrt{C}\delta_j|v|}{\delta_j^{2}-|r_j\zeta|^2}
        \]
        for all $(\zeta,v)\in B(0,j)\times \mathbb{C}^n$.
        Taking $\sup$ on both sides over $|v|= 1$, we have
        \begin{equation}
            \label{ch2:ineq:LP0}
            f^\sharp(p_j+r_j\zeta)\leq \frac{\sqrt{C}\delta_j}{\delta_j^{2}-|r_j\zeta|^2}
        \end{equation}
        By the invariance of the Levi form under biholomorphic mappings, we have
        \[
            L_\zeta(\log(1+|g_j|^2),v)=L_{p_j+r_j\zeta}(\log(1+|f_j|^2),r_{j}v)
        \]
        and hence
        \begin{equation}
            \label{ch2:ineq:LP1}
            g_j^\sharp (\zeta)=r_{j}f^\sharp(p_j+r_j\zeta).
        \end{equation}
        We note from (\ref{ch2:ineq:LP0}), (\ref{ch2:ineq:LP1}), and $\delta_j/r_j>j$, that
        \[
            g_j^\sharp (\zeta)\leq \frac{\sqrt{C}r_j\delta_j}{\delta_j^{2}-|r_j\zeta|^2}\leq \frac{\sqrt{C}/j}{1-(1/j)^2|\zeta|^2}
        \]
        for all $j$ sufficiently large and all $\zeta$, $|\zeta|<j$.

        For every $m \in N$ the sequence $\{g_j\}_{j>m}$ is normal in $|\zeta| < m$ by Marty's Theorem (Theorem \cite[Theorem 2.1]{MR4071476}).
        The well-known Cantor diagonal process yields a subsequence $\{g_k=g_{j_k}\}$ which converges uniformly on every ball $|\zeta| < R$.
        The limit function $g$ is holomorphic and satisfies $g^\sharp (\zeta) = 0$ which yields: $dg(\zeta) = 0$ for all $\zeta\in \mathbb{C}^n$, i.e.
        $g(\zeta)\equiv \textrm{constant}$ in $\mathbb{C}^n$.
    \end{proof}

    Theorem \ref{LP} can be restated in the following way.

    \begin{cor}
        \label{CLP}
        Let $\Omega\subset \mathbb{C}^n$ be a bounded domain.
        If $f: \Omega\to \mathbb{C}$ is a holomorphic function, and there exist sequences $\{p_j\}$ in $\Omega$ and $\{r_j\}$, $r_j>0$, with $\lim_{j\to \infty}r_j/\delta_j = 0$, where $\delta_j=\delta_\Omega(p_j)$, such that $\{f(p_j+r_j\zeta)\}$ converges locally uniformly to a non-constant holomorphic function in $\mathbb{C}^n$, then $f$ is non-normal.
    \end{cor}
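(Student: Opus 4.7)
The plan is very short because Corollary \ref{CLP} is literally the contrapositive of Theorem \ref{LP}, so I would derive it by a one-line reductio. First I would assume, toward a contradiction, that $f$ is normal. Then, since the hypotheses of the corollary supply exactly the data $\{p_j\} \subset \Omega$ and $r_j > 0$ with $r_j/\delta_j \to 0$ that Theorem \ref{LP} requires, I would invoke Theorem \ref{LP} directly to conclude that $\{f(p_j + r_j\zeta)\}$ converges locally uniformly on $\mathbb{C}^n$ to a constant function.

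Next I would compare this with the standing assumption of the corollary, which says that the \emph{same} sequence $\{f(p_j + r_j\zeta)\}$ converges locally uniformly on $\mathbb{C}^n$ to a \emph{non-constant} holomorphic function. Because the local uniform limit of a sequence of holomorphic functions is unique once it exists, the two conclusions are incompatible, which gives the desired contradiction and forces $f$ to be non-normal.

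The only step that requires any care (and it is essentially bookkeeping) is to verify that the hypothesis of Corollary \ref{CLP} is at least as strong as that of Theorem \ref{LP}: one must note that $\delta_\Omega(p_j)$ in the corollary is the same quantity as $\delta_j = \mathrm{dist}(p_j,\partial\Omega)$ used in the theorem, and that local uniform convergence on $\mathbb{C}^n$ (as assumed in the corollary) subsumes the weaker hypothesis implicitly used in Theorem \ref{LP} for the relevant balls $B(0,j)$ exhausting $\mathbb{C}^n$. I do not anticipate any genuine obstacle here; the entire content of the corollary is the logical reformulation, and all analytic work has already been done in the proof of Theorem \ref{LP}.
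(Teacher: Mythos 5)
Your proposal is correct and matches the paper's intent exactly: the paper offers no separate proof of Corollary \ref{CLP}, introducing it only with the remark that ``Theorem \ref{LP} can be restated in the following way,'' i.e.\ precisely the contrapositive reading you carry out. Your extra care about uniqueness of locally uniform limits (so that ``converges to a non-constant function'' genuinely negates ``converges to a constant function'') is the right small point to make explicit, and nothing further is needed.
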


    \begin{rem}
        In \cite{MR871724}, Theorem \ref{LP} was proven for the case of the unit ball in $\mathbb{C}^n$.
        The converse to Theorem \ref{LP} may not hold as it stands for $n\geq 2$.
        Lohwater and Pommerenke \cite[Theorem 1]{MR0338381} originally stated their theorem with no restriction on the speed at which $\rho_n\to 0$.
        In proving their theorem they asserted, "if $f$ is normal and $f(z_n + \rho_n\zeta) \to g(\zeta)$ locally uniformly, then $\rho_n/(1 - |z_n|) \to 0$".
        The statement in quotes is false as one can see from $f(z)=z$, $z_n=1-n^{-3}$, $\rho_n=n^{-2}$, $g(\zeta)\equiv 1$.

        This is one of the reason that the true generalization of Lohwater-Pommerenke's theorem to the higher-dimensional domain most likely breaks down.
    \end{rem}

    % ----------------------------------------------------------------
    

\begin{thebibliography}{20}
        \bibitem{MR700143} J.
        A.
        Cima and S.
        G.
        Krantz (1983) The Lindel\"{o}f principle and normal functions of several complex variables, Duke Math.
        J. 50, 303-328, MR700143

        \bibitem{MR4071476} P.
        V.
        Dovbush (2020) Zalcman's lemma in $C^n$.
        Complex Variables and Elliptic Equations, 65:5, 796-800, MR4071476

        \bibitem{MR871724} Kyong T.
        Hahn (1986).
        Higher-dimensional generalizations of some classical theorems on normal meromorphic functions.
        In: Complex Variables Theory Appl. 6.2-4 , pp. 109–121. issn: 0278-1077. doi: 10.1080/17476938608814163

        \bibitem{MR0338381} A.
        J.
        Lohwater and C.
        Pommerenke (1973).
        On normal meromorphic functions, Ann.
        Acad.
        Sci.
        Fenn.
        Ser.
        A I No. 550, 1-12, MR0338381

        \bibitem{MR379852} L.
        Zalcman (1975) A heuristic principle in complex function theory.
        Am.
        Math.
        Mon. 82, 813-818, MR379852
    \end{thebibliography}
\end{document}